\newtheorem{theorem}{Theorem}
\newtheorem{proposition}[theorem]{Proposition}
\newtheorem{lemma}[theorem]{Lemma}
\newtheorem{remark}[theorem]{Remark}
\begin{document}

\title[Meromorphic First Integrals]{SOME REMARKS ON MEROMORPHIC \\
FIRST INTEGRALS}

\author{Marco Brunella}

\address{Marco Brunella, Institut de Math\'ematiques de Bourgogne
-- UMR 5584 -- 9 Avenue Savary, 21078 Dijon, France}

\begin{abstract} A scholium on a paper by Cerveau and Lins Neto.
\end{abstract}

\maketitle

Our starting point is the following result, recently established by
Cerveau and Lins Neto in their paper \cite{CLN}:

\begin{theorem}\label{main}
Let ${\mathcal F}$ be a germ of holomorphic foliation on $({\mathbb
C}^2,0)$. Suppose that there exists a germ of real analytic
hypersurface $M\subset ({\mathbb C}^2,0)$ which is invariant by
${\mathcal F}$. Then ${\mathcal F}$ admits a meromorphic first
integral.
\end{theorem}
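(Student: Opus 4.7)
The plan is to complexify the real-analytic data, thereby producing a genuine holomorphic family of $\mathcal{F}$-invariant analytic curves parametrized by two complex variables, and then to conclude via the classical principle that such an abundance of invariant curves forces a meromorphic first integral.

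Write $M = \{f(z,\bar z) = 0\}$ with $f$ a real-valued, real-analytic germ, and let $\omega$ be a germ of holomorphic $1$-form defining $\mathcal{F}$. At a smooth point $p \in M$, the complex line $T_p\mathcal{F}$ is contained in the real hyperplane $T_pM$, and being $J$-invariant it in fact lies inside the maximal complex subspace $T_p^{\mathbb{C}}M$; both are complex lines and hence coincide. Using $T_p^{\mathbb{C}}M = \ker(\partial f)|_{T_p^{1,0}\mathbb{C}^2}$, invariance of $M$ translates into $\omega \wedge \partial f \equiv 0$ on $M$. I would then complexify by treating $\bar z$ as an independent variable $w \in (\mathbb{C}^2,0)$, forming a holomorphic germ $F(z,w)$ with $F(z,\bar z) = f(z,\bar z)$. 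Since the anti-diagonal $\{w = \bar z\}$ is a totally real submanifold of top real dimension inside the complex hypersurface $\hat M := \{F = 0\}$, the identity above extends from the real to the holomorphic setting by analytic continuation, and (choosing $F$ reduced) the complex Nullstellensatz delivers
\[
\omega(z) \wedge d_z F(z,w) = F(z,w)\cdot \tilde\alpha(z,w)
\]
as a holomorphic identity on $(\mathbb{C}^2_z \times \mathbb{C}^2_w, 0)$.

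From this identity I would read off that, for each $w_0$ near $0$ with $F(\cdot,w_0)\not\equiv 0$, the set $C_{w_0} := \{z : F(z,w_0) = 0\}$ is an $\mathcal{F}$-invariant analytic curve. If the collection $\{C_{w_0}\}$ were contained in a single fixed complex curve $C$ (equivalently, if $F$ factored as $u(z,w)\cdot g(z)$ with $u$ a unit), then $M$ itself would lie in $C$; but $\dim_{\mathbb{R}} M = 3 > 2 = \dim_{\mathbb{R}} C$, absurd. Hence $\{C_{w_0}\}$ is a genuine two-parameter holomorphic family of distinct $\mathcal{F}$-invariant curves. To conclude, I would appeal to the classical principle that a germ of holomorphic foliation on $(\mathbb{C}^2,0)$ admitting an uncountable holomorphic family of invariant analytic curves carries a non-constant meromorphic first integral: after Seidenberg's reduction of singularities, this abundance of invariant curves forces the appearance of a dicritical component in the exceptional divisor, and projection along such a component descends to a meromorphic first integral of the original $\mathcal{F}$.

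The hard part will be this last step. The complexification and the invariance identity are essentially formal once the Levi-type geometry is clarified, and the dimension count excluding a degenerate family is elementary. Extracting an honest meromorphic first integral from the family $\{C_{w_0}\}$, however, is a substantive piece of local foliation theory: one needs a careful analysis of the dicritical components appearing after desingularization, or alternatively a Mattei--Moussu style argument showing that the density of closed leaves near the singularity forces integrability of the holonomy.
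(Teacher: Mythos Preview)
Your complexification step is essentially identical to the paper's: your $\hat M=\{F(z,w)=0\}$ is a defining equation for the paper's $M^{\mathbb C}\subset U\times U^*$, and your identity $\omega\wedge d_zF=F\cdot\tilde\alpha$ is exactly the statement that $M^{\mathbb C}$ is invariant under the product foliation $\mathcal F\times\mathcal F^*$ (Lemma~\ref{leaf}). The dimension count excluding a degenerate family is also the same. So up to this point the two arguments coincide.

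The gap is your last step, and the mechanism you propose does not work. You claim that the abundance of invariant curves forces, after Seidenberg reduction, a dicritical component, and that ``projection along such a component descends to a meromorphic first integral''. Neither half is right as stated. First, your curves $C_{w_0}$ pass through the origin only when $F(0,w)\equiv 0$; in the complementary case the foliation is \emph{non}dicritical (no dicritical component ever appears), yet a holomorphic first integral still exists. Second, even when a dicritical component $E$ does appear, it furnishes only a \emph{local} fibration over a piece of $\mathbb P^1$: leaves may hit $E$ unboundedly many times or miss it entirely, so no global projection descends. A generic dicritical germ has infinitely many separatrices and no meromorphic first integral whatsoever; this is precisely the caution in Remark~\ref{leafspace}. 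The Mattei--Moussu alternative you mention requires \emph{finitely} many separatrices, the opposite regime. What does the work is not the mere uncountability of invariant curves but the analyticity of the hypersurface $\hat M$ organising them, and you have not yet used that.

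The paper exploits this analyticity by dualizing. Instead of $w_0\mapsto C_{w_0}$, consider $p\mapsto W_p:=\{w:F(p,w)=0\}\subset V$; the $\mathcal F$-invariance of the $C_{w_0}$ is \emph{equivalent} to $W_p$ being constant along leaves of $\mathcal F$, so this is already a ``first integral'', but one taking values in the space of curves in $V$. To turn it into an honest function one blows up the \emph{target} $V$ at $0$: after finitely many blow-ups the strict transforms of the $W_p$ separate on some exceptional component $D_\ell\simeq\mathbb P^1$, and their trace there defines a nonconstant meromorphic map $U^\circ\to D_\ell^{(k)}$ (the $k$-fold symmetric product) which is constant on leaves. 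Composing with any rational function on $D_\ell^{(k)}$ and extending across $0$ by Levi's theorem yields the first integral. This is the content of Propositions~\ref{criterion} and~\ref{criterionbis}, and it is exactly the missing ingredient in your sketch.
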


Of course, in this statement the hypersurface $M$ may be singular at
$0$, and this singularity may be even non-isolated. To say that $M$
is invariant by the foliation refers to its smooth part $M_{\rm
reg}$.

The proof given in \cite{CLN} is rather involved. There are two
cases: the {\it dicritical} case and the {\it nondicritical} one. In
the first case, the authors find a first integral by a quite
mysterious computation with power series. In the second case, they
use delicate dynamical considerations (holonomy group).

Our aim is to give an almost straightforward proof of Theorem
\ref{main}, which is based only on some general principles of
analytic geometry (in the spirit of our previous paper on a closely
related subject \cite{Bru}), together with a general (and simple)
criterion for the existence of a meromorphic first integral. This
relatively new proof will reveal the beautiful geometric structure
behind foliations tangent to real analytic hypersurfaces.

Let us also recall that Theorem \ref{main} generalizes to
codimension one foliations in higher dimensional spaces, by a
standard sectional argument \cite{M-M} \cite{CLN} (alternatively,
our arguments also generalize to higher dimensions with no
substantial new difficulty). Another possible generalization
concerns foliations defined on singular spaces, instead of ${\mathbb
C}^2$.

\section{An integrability criterion}

Let ${\mathcal F}$ be a holomorphic foliation on a domain $U\subset
{\mathbb C}^2$ containing the origin, with ${\rm Sing}({\mathcal
F})=\{ 0\}$. Set $U^\circ = U\setminus\{ 0\}$. A {\it meromorphic
first integral} is a nonconstant meromorphic function on $U$ which
is constant along the leaves of ${\mathcal F}$.

\begin{proposition}\label{criterion}
Suppose that there exists an irreducible analytic \footnote{To avoid
confusion: ``analytic'' without the ``real'' attribute means
``complex analytic''} hypersurface
$$W\subset U^\circ\times V,$$
$V$ being a neighbourhood of $0$ in ${\mathbb C}^2$, such that:
\begin{enumerate}
\item[(1)] for every $p\in U^\circ$, the fiber
$$W_p = W\cap (\{ p\}\times V)\subset V$$
is a proper analytic curve in $V$, passing
through the origin;
\item[(2)] if $p,q\in U^\circ$ belong to the same leaf of ${\mathcal
F}$, then $W_p=W_q$;
\item[(3)] the projection of $W$ to $V$ is Zariski-dense (i.e., not
contained in a curve).
\end{enumerate}
Then ${\mathcal F}$ admits a meromorphic first integral on $U$.
\end{proposition}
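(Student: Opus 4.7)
My plan is to regard $W$ as an analytic family of curves in $V$ parametrized by $p \in U^\circ$ (each curve passing through $0$, by~(1)), and to show that this family is effectively one-parameter; a one-dimensional modulus of the family will then pull back to a meromorphic first integral of $\mathcal{F}$. The first step is to render the assignment $p\mapsto W_p$ as a meromorphic classifying map
\[
\Phi : U^\circ \dashrightarrow \mathcal{M},
\]
where $\mathcal{M}$ parametrizes germs of analytic curves at $0\in V$. One can either invoke Barlet's space of analytic $1$-cycles applied to the family $\pi_1:W\to U^\circ$, or proceed by hand: near a generic $p_0\in U^\circ$, after choosing coordinates on $V$ adapted to $W_{p_0}$, each branch of $W_p$ admits a Weierstrass presentation of fixed degree, and $\Phi(p)$ can be defined as the tuple of coefficients of these polynomials, which depend holomorphically on $p$. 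In either formulation, $p$ and $p'$ lie in the same fiber of $\Phi$ precisely when $W_p=W_{p'}$, and the image of $\Phi$ is an irreducible analytic subset of $\mathcal{M}$ (this uses the irreducibility of $W$).

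The crux is the dimension estimate $\dim\Phi(U^\circ)=1$. On the one hand, condition~(2) says that every leaf of $\mathcal{F}$ is contained in a single fiber of $\Phi$; since leaves are one-dimensional, all fibers of $\Phi$ have dimension at least one, so
\[
\dim\Phi(U^\circ)\;\leq\;\dim U^\circ-1\;=\;1.
\]
On the other hand, if $\Phi$ were constant, all curves $W_p$ would coincide with one fixed curve $C\subset V$; irreducibility together with a dimension count would then force $W=U^\circ\times C$, so the projection of $W$ to $V$ would be the single curve $C$, contradicting~(3). Therefore $\Phi(U^\circ)$ has dimension exactly one.

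Once the image of $\Phi$ is known to be a $1$-dimensional irreducible analytic space, it carries nonconstant meromorphic functions (normalize to a Riemann surface and take a local coordinate). Choosing such a $\psi$ and setting $f:=\psi\circ\Phi$ yields a nonconstant meromorphic function on $U^\circ$, constant on each leaf of $\mathcal{F}$ by~(2); since $\{0\}$ has codimension two in $U$, Levi's extension theorem extends $f$ meromorphically to all of $U$, producing the desired first integral. The main technical obstacle is the rigorous construction of $\Phi$ --- in particular, making precise the target space $\mathcal{M}$ and checking that the assignment $p\mapsto W_p$ is genuinely meromorphic --- while the dimension count and the extension at the origin are then essentially routine.
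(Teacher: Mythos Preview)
Your proposal and the paper's proof share the same underlying idea: treat $W$ as defining a map from $U^\circ$ to a ``space of curves through $0\in V$,'' argue that the image is one-dimensional, and pull back a meromorphic function from that image (then extend across $0$ by Levi). The difference lies entirely in how the target space is realized. You appeal to Barlet's cycle space or to a Weierstrass-coefficient chart, and you rightly flag the rigorous construction of $\Phi$ as the main technical obstacle. The paper instead makes the target finite-dimensional by hand: it performs a sequence of blow-ups $\pi:\widetilde V\to V$ over $0$, takes the strict transform $\widetilde W$, and looks at its trace $Z$ on $U^\circ\times D$, where $D=\pi^{-1}(0)$. Hypothesis~(3) guarantees that after enough blow-ups $Z$ is not of the form $U^\circ\times\{\text{finite set}\}$ --- the curves $W_p$ are eventually separated --- so on some exceptional component $D_\ell\cong\mathbb{C}P^1$ the trace yields a nonconstant meromorphic map $I:U^\circ\dashrightarrow D_\ell^{(k)}$ into the $k$-fold symmetric product. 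Since $D_\ell^{(k)}$ is a projective variety, one composes with a rational function and finishes.

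The paper's route is more elementary: it avoids the Barlet machinery (which is in any case tailored to \emph{compact} cycles, whereas your $W_p$ are merely proper in the open set $V$), and it also sidesteps the difficulty your Weierstrass approach would meet, namely that the multiplicity of $W_p$ at $0$ --- hence the degree of the Weierstrass polynomial --- can jump along a divisor in $U^\circ$, so a single coefficient chart does not cover everything. In effect, the iterated blow-up is precisely the device that replaces your infinite-dimensional $\mathcal{M}$ by a concrete projective variety; what it buys is that the ``technical obstacle'' you identify simply disappears.
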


The germ-oriented reader should here replace $V$ with its germ at
the origin, and $W$ with its germ along $U^\circ\times\{ 0\}$.

\begin{proof}
It can be resumed as follows. We already have, by assumptions (1)
and (2), a ``first integral'', but, instead of being a meromorphic
function, it is a map which takes values into the ``space of curves
in $V$ through $0$''. Hence, roughly speaking, we shall give an
algebraic structure to such a space of curves, so that the true
meromorphic first integral will be obtained by composition of the
former ``first integral'' with a generic meromorphic function on the
space of curves. Hypothesis (3) will guarantee that such a first
integral is not identically constant. All of this is trivial if, for
instance, each $W_p$ is a line: the space of lines through the
origin is the familiar algebraic variety ${\mathbb C}P^1$. The
general case only requires some additional blow-ups.

Given a sequence of $\ell$ blow-ups
$$\pi : \widetilde V\to V$$
over the origin, denote by $D=\cup_{j=1}^\ell D_j$ the exceptional
divisor $\pi^{-1}(0)$, and set
$$\Pi = id\times\pi : U^\circ\times\widetilde V \to U^\circ\times
V.$$ Denote by $\widetilde W$ the strict transform of $W$, i.e. the
closure of the inverse image by $\Pi$ of $W\setminus (W\cap
U^\circ\times\{ 0\} )$. The trace of $\widetilde W$ on
$U^\circ\times D$ is a hypersurface (of dimension $2$), and we shall
denote by $Z$ the union of those irreducible components whose
projection to $U^\circ$ is dominant (the other components project to
curves). Thus, for $p\in U^\circ$ generic, the fiber
$$Z_p = Z\cap (\{ p\}\times D)$$
is a finite subset of $D$, which actually coincides with the trace
on $D$ of the strict transform of $W_p$ (here we have to exclude not
only those points $p$ such that $Z_p$ contains some component of
$D$, but also those points which belong to the projection of the
non-dominant components of the trace of $\widetilde W$: these are
precisely the conditions ensuring that the fiber of $\widetilde W$
over $p$ is equal to the strict transform of $W_p$).

Now, hypothesis (3) implies the following: there exists a sequence
of blow-ups $\pi :\widetilde V\to V$ over the origin such that $Z$
is {\it not} of the type $U^\circ\times\{ {\rm finite\ set} \}$.
Indeed, in the opposite case the generic curves $W_p$ would be all
unseparable by any sequence of blow-ups, i.e. they would be all
equal, and this contradicts the Zariski-density of the projection
$W\to V$ (here we use the irreducibility of $W$, and also the fact
that every $W_p$ pass through the origin).

In this way, we get an irreducible component of $D$ (say, $D_\ell$)
such that the part of $Z$ inside $U^\circ\times D_\ell$ (call it
$Z_\ell$) is dominant over $U^\circ$ and Zariski-dense over
$D_\ell$.

If $k$ is the degree of $Z_\ell\to U^\circ$, then $Z_\ell$ defines a
meromorphic map $I$ from $U^\circ$ to $D_\ell^{(k)}$, the $k$-fold
symmetric product of $D_\ell$. Such a map is not constant, but it is
constant along the leaves of ${\mathcal F}$, by hypothesis (2).
Since $D_\ell^{(k)}$ is an algebraic variety, we can find $F\in
{\mathcal M}(D_\ell^{(k)})$ such that $f=F\circ I$ is a nonconstant
meromorphic function, constant along the leaves. Finally, $f$
extends from $U^\circ$ to $U$ by Levi's theorem.
\end{proof}

\begin{remark}\label{leafspace}
{\rm Consider a foliation ${\mathcal F}$ on $U\subset{\mathbb C}^2$,
${\rm Sing}({\mathcal F})=\{ 0\}$, such that every leaf $L$ is a
so-called {\it separatrix} at $0$: $L\cup\{ 0\}$ is a proper
analytic curve in $U$. This occurs if ${\mathcal F}$ has a
meromorphic first integral having $0$ as indeterminacy point, but,
as is well known, the converse implication is far from being true,
see for instance \cite{Mou} and references therein. We have a
naturally defined subset $S$ of $U^\circ\times U$: its fiber over
$p$ is, by definition, the curve $\overline L_p = L_p\cup\{ 0\}$.
However, generally speaking this subset $S$ is {\it not} an analytic
subset, since it may be not closed.

Of course, we may take the Zariski-closure $\widehat S$ of $S$,
which however could be the full $U^\circ\times U$. If it is not the
case, i.e. if $\dim \widehat S =3$, then by Proposition
\ref{criterion} we get a meromorphic first integral, and the
converse is also true by an easy argument. Note, however, that in
this special case our Proposition \ref{criterion} is closely related
to old results by B. Kaup  and Suzuki \cite[\S 5]{Suz}, relating the
existence of first integrals with the analyticity of the graph of
the foliation.

Let us stress that, even when a first integral exists, the subset
$S$ is typically {\it not} an analytic subset, that is its
Zariski-closure $\widehat S$ may be much larger than $S$. Indeed,
the fiber of $\widehat S$ over $p$ may contain, besides $\overline
L_p$, other components $\overline L_{p_1},\ldots ,\overline
L_{p_n}$. These additional separatrices are precisely the ones which
cannot be separated from $\overline L_p$ by meromorphic first
integrals. In other words, whereas $S$ represents the (nonanalytic)
equivalence relation generated by the leaves, $\widehat S$
represents the (analytic) equivalence relation generated by level
sets of meromorphic first integrals.}
\end{remark}

There is a variant of Proposition \ref{criterion} in which the
hypothesis that every $W_p$ is a curve passing through the origin of
$V$ is replaced by a similar asymptotic hypothesis over the singular
point of the foliation. Firstly observe that if $W\subset
U^\circ\times V$ is as in Proposition \ref{criterion}, then, by
standard extension theorems, $W$ can be prolonged to an irreducible
analytic hypersurface in $U\times V$. However, it may happen that
the fiber over $0$ of this extension is not a curve, but the full
$V$; this is precisely the case in which the meromorphic first
integral has an indeterminacy point at $0$.

\begin{proposition}\label{criterionbis}
Suppose that there exists an irreducible analytic hypersurface
$W\subset U\times V$, $V$ being a neighbourhood of $0$ in ${\mathbb
C}^2$, such that:
\begin{enumerate}
\item[(1)] for every $p\in U$, the fiber
$W_p = W\cap (\{ p\}\times V)\subset V$ is a proper analytic curve
in $V$, passing through the origin when $p=0$;
\item[(2)] if $p,q\in U^\circ$ belong to the same leaf of ${\mathcal
F}$, then $W_p=W_q$;
\item[(3)] the projection of $W$ to $V$ is Zariski-dense.
\end{enumerate}
Then ${\mathcal F}$ admits a holomorphic first integral on some
(possibly smaller) neighbourhood of $0$.
\end{proposition}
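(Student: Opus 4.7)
The plan is to bypass the blow-up machinery used in the proof of Proposition~\ref{criterion} and instead extract the holomorphic first integral directly from a local transverse slice to $W_0$. Since $W_0 \subset V$ is a proper analytic curve through the origin, it has a dense set of smooth points distinct from $0$, so I would fix a smooth point $c_0 \in W_0 \setminus \{0\}$ and a small holomorphic disk $C \subset V$ meeting $W_0$ transversally at $c_0$, arranging that $W_0 \cap C = \{c_0\}$ with multiplicity one.

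The next step is to analyze the trace $W \cap (U \times C)$. A dimension count gives that this trace is an analytic hypersurface in the three-dimensional space $U \times C$. If $h(p, q)$ is a local defining function of $W$ near $(0, c_0)$, then the transverse simple intersection at $p = 0$ translates into the fact that the restriction of $h(0, \cdot)$ to $C$ has a simple zero at $c_0$. Weierstrass preparation, applied in the direction along $C$, then expresses the trace locally as the graph of a holomorphic function $\phi : (U, 0) \to (C, c_0)$, defined on a full neighborhood of $0 \in U$. By construction $\phi(p)$ is the unique intersection of $W_p$ with $C$ close to $c_0$, so hypothesis~(2) immediately gives that $\phi$ is constant along the leaves of $\mathcal{F}$.

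Finally, I would check that $\phi$ is non-constant for a generic choice of $c_0$, and this is where hypothesis~(3) enters. If $\phi \equiv c_0$ near $0$, then $U \times \{c_0\} \subset W$ locally; if this held for $c_0$ varying in an open subset of $W_0$, then $W$ would contain $U \times W_0$, and by irreducibility and a dimension count $W$ would equal $U \times W_0$, whose projection to $V$ is the proper curve $W_0$, contradicting the Zariski-density of the projection. Composing $\phi$ with a non-constant holomorphic coordinate on the disk $C$ therefore produces the desired holomorphic first integral on a neighborhood of $0 \in U$. The main technical point is the Weierstrass-preparation step: one must verify that the local trace $W \cap (U \times C)$ really is a single graph of degree one over $U$ near $(0, c_0)$, even when $W$ is singular there. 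This is exactly what the hypothesis that $W_0$ is a \emph{proper} curve guarantees: the transverse intersection number of $W_0$ with $C$ at $c_0$ equals one, and being a numerical invariant it persists when $p$ is deformed.
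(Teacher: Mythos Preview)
Your approach is essentially the paper's own argument: slice $W$ by $U\times(\text{disk})$, read off from the trace a holomorphic map from $U$ into a finite-dimensional target, and use hypothesis~(3) to rule out constancy. The paper places its disk $D$ through the origin of $V$ (transverse to $W_0$ there), allows the projection $Z\to U$ to have arbitrary finite degree $k$, and lands in the symmetric product $D^{(k)}$; you place your disk $C$ through a smooth point $c_0\neq 0$ of $W_0$ and try to force $k=1$.

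The degree-one claim is the step that does not go through as written. A reduced local equation $h$ of $W$ near $(0,c_0)$ restricts to a function $h(0,\cdot)$ on $V$ whose zero \emph{set} is $W_0$, but whose zero \emph{divisor} may carry multiplicity $m>1$ along the branch of $W_0$ through $c_0$ (take for instance $h=q_1^2-p_1q_2$, irreducible, with $h(0,\cdot)=q_1^2$). Then $h(0,\cdot)\vert_C$ vanishes at $c_0$ to order $m$, Weierstrass preparation yields a polynomial of degree $m$ rather than a graph, and for $p$ near $0$ the disk $C$ meets $W_p$ in $m$ points near $c_0$. Your numerical-invariance remark is correct, but the invariant being preserved is $m$, not $1$: what deforms is the intersection number of the \emph{cycle} $[W_0]$ with $C$, not that of the reduced curve $(W_0)_{\rm red}$. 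The repair is immediate---replace $\phi:U\to C$ by the induced holomorphic map $U\to C^{(m)}$ and then compose with a holomorphic function on the symmetric product, exactly as the paper does---and your non-constancy argument (forcing $W=U\times W_0$ and contradicting~(3)) survives unchanged.
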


\begin{proof}
It is even simpler than the previous one; in some sense, it is the
``no blow-up case''.

Take a (possibly singular) disk $D\subset V$ passing through $0$ and
intersecting $W_0$ only at $0$. Take the trace $Z$ of $W$ on
$U\times D$. Then, up to shrinking $U$, $Z$ is a hypersurface in
$U\times D$ and the projection $Z\to U$ is proper, say of degree
$k$. We thus obtain, as before, a first integral with values in
$D^{(k)}$. This last space admits a lot of holomorphic functions,
and so we get a holomorphic first integral. Thanks to hypothesis
(3), and by a suitable choice of $D$, this first integral will be
not identically constant (it is sufficient to choose $D$ highly
tangent to a branch of $W_0$).
\end{proof}

\begin{remark}\label{leafspacebis}
{\rm Consider a foliation ${\mathcal F}$ on $U\subset{\mathbb C}^2$,
${\rm Sing}({\mathcal F})=\{ 0\}$, such that there is a finite
number of separatrices and any other leaf is a proper analytic curve
in $U$. Then, on a possibly smaller $U'\subset U$, the foliation
admits a holomorphic first integral \cite{M-M}. This result can be
recasted into Proposition \ref{criterionbis}, but one needs some
further work. The idea is to look again at the subset $S\subset
U^\circ\times U$ of Remark \ref{leafspace}, and to show that the
{\it topological} closure $\overline S$ in $U\times U$ is an
analytic hypersurface, which cuts the fiber over $0$ along a curve
passing through $0$. This last curve will be the union of the
separatrices (plus the origin).

This indispensable further work can be found in \cite{Mou}. Let
$\Sigma\subset U$ be the union of the separatrices and the origin.
We may assume that the closure of each separatrix is a (singular)
disk passing through $0$ and transverse to the boundary of $U$.
According to \cite[Lemme 1]{Mou}, if $p$ is sufficiently close to
$0$, and outside $\Sigma$, then $L_p$ is a curve transverse to the
boundary of $U$. Using the finiteness of the holonomy of $L_p$
(which is an elementary fact) and Reeb stability, it is then easy to
see that the restriction of $S$ to $(U'\setminus\Sigma ')\times U$
is an analytic hypersurface, where $U'$ is a sufficiently small
neighbourhood of $0$ and $\Sigma '=\Sigma\cap U'$. Take now the
topological closure $\overline S$ of $S$ in $U'\times U$. By
standard results (Remmert-Stein), if $\overline S$ is not an
analytic hypersurface, then it must contain an irreducible component
of $\Sigma '\times U$; this is however impossible, again by
\cite[Lemme 1]{Mou} (which implies that the ${\mathcal
F}$-saturation of $U'$ cannot be the full $U$). Hence, $\overline S$
is an analytic hypersurface in $U'\times U$. By the same reason, its
fiber over $0$ cannot be the full $U$, and therefore it must
coincide with $\Sigma$. We can now apply Proposition
\ref{criterionbis}.

It is also worth observing that the fiber of $\overline S$ over a
point $p\in U'\setminus\Sigma '$ is the single leaf $L_p$. This
corresponds to the fact that the leaves outside the separatrices can
be separated by holomorphic first integrals.}
\end{remark}

\section{Complexification of real hypersurfaces}

Consider now the setting of Theorem \ref{main}: ${\mathcal F}$ is a
foliation on $U\subset{\mathbb C}^2$, singular at $0\in U$, and $M$
is a real analytic hypersurface passing through the origin and
invariant by ${\mathcal F}$. We denote by $M_{\rm reg}\subset M$ the
(open) subset of regular points, i.e. the points where $M$ is a real
analytic submanifold of dimension $3$. We assume that
$0\in\overline{M_{\rm reg}}$ (otherwise, the germ of $M$ at $0$
would not be a germ of {\it hypersurface}, as prescribed by Theorem
\ref{main}). Without loss of generality, we may also assume that $M$
is irreducible, and even that the germ of $M$ at $0$ is irreducible.

Let us recall few facts concerning complexification, see also
\cite[\S 3]{Bru}.

Denote by $U^*$ the complex manifold conjugate to $U$: it is the
same differentiable manifold, but with the opposite complex
structure; equivalently, holomorphic functions on $U^*$ are the same
as antiholomorphic functions on $U$. Remark that if $A$ is an
analytic subset of $U$, then it is analytic also as a subset of
$U^*$. As such, it will be denoted by $A^*$. In particular, every
point $p\in U$ has a ``mirror'' point $p^*\in U^*$. Similarly, if
${\mathcal F}$ is a holomorphic foliation on $U$, then it is
holomorphic also as a foliation on $U^*$, and as such it will be
denoted by ${\mathcal F}^*$. Remark that, generally speaking, the
two foliations ${\mathcal F}$ and ${\mathcal F}^*$ are {\it
different} as holomorphic foliations: the identity map $U\to U^*$
obviously conjugate ${\mathcal F}$ to ${\mathcal F}^*$, but such a
map is {\it anti}holomorphic, and not holomorphic. For example, if
$\gamma\subset L\in{\mathcal F}$ is a loop with linear holonomy
$\lambda$, then the same loop $\gamma\subset L^*\in{\mathcal F}^*$
has linear holonomy $\overline\lambda$.

In the product space $U\times U^*$ (with the product complex
structure) we have the involution
$$\jmath : U\times U^* \rightarrow U\times U^*$$
$$\jmath (p,q^*) = (q,p^*).$$
It is antiholomorphic. Its fixed point set is the diagonal $\Delta$,
and it is a totally real submanifold.

It is convenient to look at our real analytic hypersurface $M$ in
$U$ as a subset of the diagonal:
$$M\subset \Delta \subset U\times U^*.$$
Then, $M$ can be complexified: there exists a neighbourhood
$\widehat U\subset U\times U^*$ of the diagonal and an irreducible
complex analytic hypersurface $M^{\mathbb C}$ in $\widehat U$ such
that $$M^{\mathbb C}\cap\Delta = M.$$ Up to restricting $U$ around
the origin, we may assume $\widehat U = U\times U^*$. Remark that
$$\jmath (M^{\mathbb C}) = M^{\mathbb C} \qquad {\rm and}\qquad {\rm
Fix}(\jmath\vert_{M^{\mathbb C}})=M.$$

Actually, this complexification can be done on any real analytic
subset. In particular, we can start with a complex analytic subset
$A\subset U$, and look at it as a subset of $\Delta$, thus
forgetting its complex analytic structure and retaining only its
real analytic one. Its complexification is then simply the product
$A\times A^*$ (which could be pompously called ``complexification of
the decomplexification of $A$'').

Consider now the projection
$$pr : M^{\mathbb C}\rightarrow U$$
to the first factor, and for every $p\in U$ set
$$M_p^{\mathbb C} = pr^{-1}(p).$$
It is an analytic subset of $U^*$.

\begin{lemma}\label{curve}
Up to shrinking $U$ around the origin, we have: for every $p\in
U^\circ$, $M_p^{\mathbb C}$ is a (nonempty) curve in $U^*$.
\end{lemma}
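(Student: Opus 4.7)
The plan is a dimension count combined with the constraint that $M^{\mathbb C}$ is irreducible and meets the diagonal $\Delta$ exactly in $M$. Since $M^{\mathbb C}$ is a complex hypersurface in $U\times U^*$ of complex dimension $3$ while the base $U$ has complex dimension $2$, the generic fiber of $pr$ has dimension $1$; what remains is to exclude, for $p\in U^\circ$, both fibers of dimension $2$ (i.e.\ $M_p^{\mathbb C}=U^*$) and empty fibers.

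To rule out the first possibility, I would examine the closed analytic set $B=\{p\in U : \dim M_p^{\mathbb C}\geq 2\}$. For $p\in B$ the fiber $M_p^{\mathbb C}$ is a $2$-dimensional analytic subset of $U^*$, hence equals $U^*$, so $B\times U^*\subset M^{\mathbb C}$. If $\dim B\geq 1$, then $B\times U^*$ would be an analytic subset of complex dimension $\geq 3$ inside the irreducible hypersurface $M^{\mathbb C}$, forcing the equality $M^{\mathbb C}=B\times U^*$; intersecting with $\Delta$ would give $M=\{(p,p^*):p\in B\}$, a set of real dimension at most $2$, contradicting the standing assumption $0\in\overline{M_{\rm reg}}$. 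Hence $B$ is at most $0$-dimensional and, after shrinking $U$, may be assumed to lie in $\{0\}$.

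For nonemptyness, I would shrink $U$ further so that $pr$ becomes proper (a small polydisk suffices), making the image $pr(M^{\mathbb C})$ an analytic subset of $U$ containing $M$. Since $M$ has real dimension $3$, it cannot lie in a proper analytic subset of $U$ (which has real dimension at most $2$), so $pr(M^{\mathbb C})=U$. The fiber-dimension inequality $\dim M_p^{\mathbb C}\geq \dim M^{\mathbb C}-\dim U=1$ then combines with the previous upper bound to show that, for $p\in U^\circ$, the fiber $M_p^{\mathbb C}$ is nonempty of pure complex dimension $1$, i.e.\ a curve in $U^*$. The delicate step is the dimension count ruling out $\dim B\geq 1$: it relies on both the irreducibility of $M^{\mathbb C}$ and on the fact that $M$ is a genuine real hypersurface near $0$, rather than a lower-dimensional real analytic set.
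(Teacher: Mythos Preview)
Your argument ruling out two-dimensional fibers is fine; it unpacks explicitly what the paper states in one line (irreducibility of $M^{\mathbb C}$ forces the locus of two-dimensional fibers to be discrete), and your contradiction via $M^{\mathbb C}\cap\Delta=M$ is a clean way to see it. The genuine gap is in the nonemptyness step: your claim that $pr:M^{\mathbb C}\to U$ can be made \emph{proper} by shrinking $U$ to a small polydisk is false. Take $M=\{\operatorname{Re} z=0\}$ in coordinates $(z,w)$ on $U\subset\mathbb{C}^2$; in coordinates $(z,w,a,b)$ on $U\times U^*$ one has $M^{\mathbb C}=\{z+a=0\}$, and the fiber over each $(z_0,w_0)$ is the line $\{a=-z_0\}$ with $b$ free. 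Hence $pr^{-1}(K)$ is noncompact for every compact $K\subset U$, and this persists under any shrinking of $U$ (which shrinks $U^*$ simultaneously). So Remmert's proper mapping theorem is unavailable, and you have no reason to believe $pr(M^{\mathbb C})$ is analytic.

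The paper's route to nonemptyness is a case distinction on $M_0^{\mathbb C}$, and the ingredient you are missing is the $\jmath$-symmetry of $M^{\mathbb C}$. If $M_0^{\mathbb C}=U^*$, i.e.\ $\{0\}\times U^*\subset M^{\mathbb C}$, then applying the antiholomorphic involution $\jmath$ gives $U\times\{0^*\}\subset M^{\mathbb C}$, so $0^*\in M_p^{\mathbb C}$ for every $p$ and each fiber is a curve through the origin. If instead $M_0^{\mathbb C}$ is already a curve, then all fibers have the minimal dimension $\dim M^{\mathbb C}-\dim U=1$ near $(0,0^*)$, and Remmert's \emph{open} mapping theorem (not the proper one) makes $pr$ open, hence surjective after shrinking $U$. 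Your idea of using $M\subset pr(M^{\mathbb C})$ does not replace this: even where $pr$ is open, an open subset of $U^\circ$ containing the real $3$-manifold $M$ need not be all of $U^\circ$. The dichotomy is moreover not incidental---it is exactly the dicritical/nondicritical split used downstream in the proof of Theorem~\ref{main}.
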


\begin{proof}
The irreducibility of $M^{\mathbb C}$ implies that the set of points
of $U$ over which the fiber is two-dimensional (i.e., the full
$U^*$) is discrete. Hence, up to shrinking $U$, we get that
$M_p^{\mathbb C}$ is at most one-dimensional for every $p\in
U^\circ$ (note that a shrinking of $U$ implies a simultaneous
shrinking of $U^*$, but this is not a problem).

Obviously $M_p^{\mathbb C}$ cannot contain isolated points, because
$M^{\mathbb C}$ is a hypersurface. Therefore, it remains to show
that it is not empty. Of course $M_0^{\mathbb C}$ is not empty, for
any choice of $U$, and we can distinguish two cases:

(a) $M_0^{\mathbb C}=U^*$, i.e. $M^{\mathbb C}$ contains $\{
0\}\times U^*$. Because $M^{\mathbb C}$ is $\jmath$-invariant, this
means that also the horizontal fiber $U\times\{ 0^*\}$ is fully
contained in $M^{\mathbb C}$. As a consequence, every $M_p^{\mathbb
C}$, $p\in U^\circ$, is a curve which, moreover, passes through the
origin.

(b) $M_0^{\mathbb C}$ is a curve in $U^*$. Then, by a standard
result (Remmert's Rank Theorem), the map $pr$ is open, and hence
surjective for a suitable choice of $U$.
\end{proof}

We shall see that case (a) corresponds to the dicritical case, and
case (b) to the nondicritical one.

Recall now that we have a holomorphic foliation ${\mathcal F}$ on
$U$, leaving $M$ invariant.

\begin{lemma}\label{leaf}
For every $p\in U^\circ$, the curve $M_p^{\mathbb C}\subset U^*$ is
invariant by ${\mathcal F}^*$. Moreover, if $p$ and $q$ belong to
the same leaf, then $M_p^{\mathbb C} = M_q^{\mathbb C}$.
\end{lemma}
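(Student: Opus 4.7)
The plan is to lift everything to the product $U \times U^*$. There we have the horizontal foliation $\mathcal{F}_H$ (the pullback of $\mathcal{F}$ by the first projection) and the vertical foliation $\mathcal{F}_V$ (the pullback of $\mathcal{F}^*$ by the second projection), each by complex curves. The central claim I would establish is that both $\mathcal{F}_H$ and $\mathcal{F}_V$ are tangent to $M^{\mathbb C}$; once this is known, the two assertions of the lemma drop out by restricting $\mathcal{F}_V$ to the slice $\{p\}\times U^*$ for the first, and by flowing horizontally along a leaf of $\mathcal F$ for the second.

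For the tangency of $\mathcal F_H$: let $X$ be a local holomorphic generator of $\mathcal F$ and let $\rho(z,\bar z)=0$ be a real analytic equation for $M$. The $\mathcal F$-invariance of $M$ is equivalent to an identity $X\rho = h\rho$ on $U$ with $h$ real analytic; since $X$ is of type $(1,0)$, only the holomorphic derivatives of $\rho$ appear in $X\rho$, and after polarizing to $U\times U^*$ this becomes
\[
(X\oplus 0)\bigl(\rho(z,w)\bigr) = h(z,w)\,\rho(z,w),
\]
which says precisely that the horizontal vector field $X\oplus 0$ is tangent to $M^{\mathbb C}=\{\rho(z,w)=0\}$. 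Applying the antiholomorphic involution $\jmath$, which preserves $M^{\mathbb C}$ and interchanges $\mathcal F_H$ with $\mathcal F_V$ (since $\jmath(L\times\{q^*\})=\{q\}\times L^*$ exchanges horizontal with vertical leaves), yields the tangency of $\mathcal F_V$ to $M^{\mathbb C}$ as well.

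The first claim of the lemma is then immediate: $M_p^{\mathbb C}$ is the intersection $M^{\mathbb C}\cap(\{p\}\times U^*)$, and the restriction of $\mathcal F_V$ to this slice is precisely $\mathcal F^*$; since $\mathcal F_V$ is tangent to both the slice and to $M^{\mathbb C}$, it is tangent to their intersection, so $M_p^{\mathbb C}$ is $\mathcal F^*$-invariant (Lemma \ref{curve} ensuring that for $p\in U^\circ$ we are genuinely intersecting transversally enough to obtain a curve rather than all of $U^*$). For the second claim, the local flow of $X\oplus 0$ preserves $M^{\mathbb C}$ and moves only the first coordinate along the leaves of $\mathcal F$; hence, for any fixed $r^*\in U^*$, the set $\{p'\in L : (p',r^*)\in M^{\mathbb C}\}$ is both open (by the flow) and closed (since $M^{\mathbb C}$ is closed) in the leaf $L$, so it is empty or all of $L$, yielding $M_p^{\mathbb C}=M_q^{\mathbb C}$ whenever $p,q$ lie in the same leaf.

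The one delicate point I expect to argue carefully is the passage from the identity $X\rho=h\rho$ over $U$, with $h$ merely real analytic, to the holomorphic tangency identity $(X\oplus 0)\rho(z,w)=h(z,w)\rho(z,w)$ over $U\times U^*$. This is where the real and complex structures meet: $h$ complexifies to a holomorphic function on $U\times U^*$, and the difference $(X\oplus 0)\rho(z,w)-h(z,w)\rho(z,w)$ is then a holomorphic function vanishing on the totally real diagonal $\Delta$, hence vanishing identically by the identity principle. Once this identity is in hand, the remainder of the proof is just geometric bookkeeping on $U\times U^*$.
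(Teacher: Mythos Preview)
Your proof is correct and shares the same overall architecture as the paper's: establish that $M^{\mathbb C}$ is invariant under the product foliation on $U\times U^*$ (equivalently, under both your $\mathcal F_H$ and $\mathcal F_V$), and then read off the two assertions from that invariance. The difference lies in how the invariance is obtained. The paper argues geometrically: for $p\in M$, the leaf $L_p\times L_p^*$ of $\mathcal F\times\mathcal F^*$ through $(p,p^*)$ is precisely the complexification of the real-analytic surface $L_p\subset\Delta$, and since $L_p\subset M$ this complexification sits inside $M^{\mathbb C}$; a continuum of such leaves, parametrised by the maximally totally real $M$, then forces the irreducible hypersurface $M^{\mathbb C}$ to be invariant. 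You instead complexify the tangency identity $X\rho=h\rho$ directly and use the $\jmath$-symmetry to pass from horizontal to vertical. Your route is more computational and needs a word of care at singular points of $M$ (the factorisation $X\rho=h\rho$ is immediate only near $M_{\rm reg}$; one then propagates by analyticity), while the paper's avoids defining equations altogether and makes the complexification mechanism more transparent. Both reach the same key fact, and the deduction of the lemma's two claims from it is essentially identical in the two write-ups.
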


\begin{proof}
This is basically \cite[Lemma 3.1]{Bru}, but let us explain it in a
slightly different manner.

On $U\times U^*$ we have the foliation (of dimension $2$) ${\mathcal
F}\times{\mathcal F}^*$. It is nonsingular on $U^\circ\times
U^{\circ *}$, and its leaf through $(p,q^*)$ is
$$L_{p,q^*} = L_p\times L_{q^*}^*$$
where the first factor is the leaf of ${\mathcal F}$ through $p$ and
the second factor is the leaf of ${\mathcal F}^*$ through $q^*$. In
particular, if $(p,p^*)\in\Delta$, then $L_{p,p^*} = L_p\times
L_{p^*}^*$, and this is also the complexification of
$L_p\subset\Delta$ (here $L_p$ is not yet properly embedded, but it
doesn't matter for the next arguments). It follows that if we take a
leaf $L_p$ contained in $M$, then the leaf $L_{p,p^*}$ is contained
in $M^{\mathbb C}$.  Thus we have found a continuum of leaves of
${\mathcal F}\times{\mathcal F}^*$ which are contained in
$M^{\mathbb C}$, and therefore we get that $M^{\mathbb C}$ is {\it
invariant} by ${\mathcal F}\times{\mathcal F}^*$ (this is not a
surprise, for this last foliation can be understood as the
complexification of the decomplexification of ${\mathcal F}$, which
leaves $M$ invariant).

As a consequence of this, if $L$ is any leaf of ${\mathcal F}$, its
preimage $pr^{-1}(L)\subset M^{\mathbb C}$ is a union of leaves of
${\mathcal F}\times{\mathcal F}^*$ (plus possibly some singular
point on $U^\circ\times\{ 0^*\}$), i.e. it is of the form $L\times
(L_1^*\cup\ldots \cup L_n^*)$ for suitable leaves $L_j^*$ of
${\mathcal F}^*$ (plus possibly some singular point). But this is
precisely the assertion of the lemma.
\end{proof}

\begin{remark}\label{double}
{\rm Without assuming the existence of ${\mathcal F}$, the same
argument shows the following: if $M\subset U$ is any real analytic
Levi-flat hypersurface, then on $M^{\mathbb C}$ we have a
two-dimensional foliation whose leaves are products of horizontal
and vertical fibers of $M^{\mathbb C}$. Here the essential point is
that if we take a horizontal fiber and a vertical fiber of
$M^{\mathbb C}$, passing through the same point of $M^{\mathbb C}$,
then their product is still contained in $M^{\mathbb C}$. This is a
remarkable symmetry property of $M^{\mathbb C}$, and of course it is
a manifestation of the Levi-flatness of $M$. This foliation appears
also in \cite{CLN}, as complexification of the Levi foliation, but
the authors obtain the properness of leaves only after a long tour.}
\end{remark}

\begin{remark}\label{multiple}
{\rm The fact that $M_p^{\mathbb C}$ may contain several leaves of
${\mathcal F}^*$ should be compared with the phenomenon described in
Remark \ref{leafspace}. Note also that on a neighbourhood of $M_{\rm
reg}$ we have a Schwarz reflection at the level of the leaf space
\cite[p. 669]{Bru}. If $p$ is close to $M_{\rm reg}$, then
$M_p^{\mathbb C}$ contains the Schwarz reflection of $L_p$ (which
must be understood as a leaf of ${\mathcal F}^*$). The fact that
$M_p^{\mathbb C}$ is defined for every $p$ could be interpreted as a
sort of ``globalization'' of that Schwarz reflection, and the fact
that $M_p^{\mathbb C}$ contains several leaves suggests that
``reflection'' should be replaced by ``correspondence''.

For example, suppose that ${\mathcal F}$ is the radial foliation
($zdw-wdz=0$), so that $M$ corresponds to a real algebraic curve
$\gamma\subset{\mathbb C}P^1$ ($=$ the space of leaves of ${\mathcal
F}$). The complexification of $\gamma$ is a complex algebraic curve
$\gamma^{\mathbb C}\subset {\mathbb C}P^1\times{\mathbb C}P^{1*}$,
which gives an antiholomorphic correspondence of ${\mathbb C}P^1$
with itself. \footnote{We say that a real analytic curve
$\gamma\subset{\mathbb C}P^1$ is {\it real algebraic} if its
complexification $\gamma^{\mathbb C}$, which is in principle defined
only on a neighbourhood of the diagonal, extends to the full
${\mathbb C}P^1\times{\mathbb C}P^{1*}$. With this definition, it is
easy to see that a radial Levi-flat hypersurface $M$ is analytic at
$0$ if and only if the corresponding $\gamma$ is real algebraic; the
complex curve $\gamma^{\mathbb C}$ is then the trace of $\widetilde
M^{\mathbb C}$ on ${\mathbb C}P^1\times{\mathbb
C}P^{1*}\subset\widetilde U\times \widetilde U^*$, where $\widetilde
M$ is the strict transform of $M$ in $\widetilde U =$ the blow-up of
$U$ at $0$.} }
\end{remark}

We can now immediately complete the proof of Theorem \ref{main}.
Firstly note that, by Lemmata \ref{curve} and \ref{leaf}, every leaf
of ${\mathcal F}$ is properly embedded in $U^\circ$. If
$M_0^{\mathbb C}$ is the full $U^*$ then, as observed in the proof
of Lemma \ref{curve}, every $M_p^{\mathbb C}$, $p\not= 0$, is a
curve through the origin, and so we can apply Proposition
\ref{criterion} to get a meromorphic first integral. If
$M_0^{\mathbb C}$ is a curve, then it is a curve through the origin,
by symmetry, and so we can apply Proposition \ref{criterionbis}
(actually, in that Proposition the requirement that $W_0$ passes
through $0$ can be obviously replaced by $W_0\not= \emptyset$).

Let us conclude with a question. In the setting of Theorem
\ref{main}, consider firstly the case where we have a (primitive)
holomorphic first integral $f$, with $f(0)=0$. It is then easy to
see that $M=f^{-1}(\gamma )$, with $\gamma\subset{\mathbb C}$ a real
analytic curve passing through the origin. Indeed, we obviously have
$M=\widehat f^{-1}(\widehat\gamma )$ where $\widehat f$ is the
projection to the space of leaves $\Sigma$ (a non-Hausdorff Riemann
surface \cite{Mou} \cite{Suz}) and $\widehat\gamma\subset\Sigma$ is
a real analytic curve. Moreover, $f=e\circ\widehat f$, where
$e:\Sigma\to V\subset{\mathbb C}$ is the map which collapses
nonseparated points. However, due to the special structure of
$\Sigma$ in this case (there is only a finite set of nonseparated
points, all sent to $0$ by $e$) we certainly have $\widehat\gamma
=e^{-1}(\gamma )$ for some $\gamma\subset V$, and so
$M=f^{-1}(\gamma )$. Now, consider the dicritical case, where the
first integral $f$ is only meromorphic, and $0$ is an indeterminacy
point. Can we find a real algebraic curve $\gamma\subset{\mathbb
C}P^1$ such that $M=f^{-1}(\gamma )$? The problem here is that the
collapsing map $e:\Sigma\to{\mathbb C}P^1$ is much more complicated,
and in principle the curve $\widehat\gamma\subset\Sigma$ could be
not of the form $e^{-1}(\gamma )$, i.e. there could exist two
unseparable leaves $L,L'$ with $L$ in $M$ but not $L'$. Of course,
we can set $\gamma = e(\widehat\gamma )$ and take $f^{-1}(\gamma )$,
but this last one could be reducible, and our initial $M$ could be
only one irreducible component of it.

\begin{figure}[h]
\begin{center}
\includegraphics[width=10cm,height=4cm]{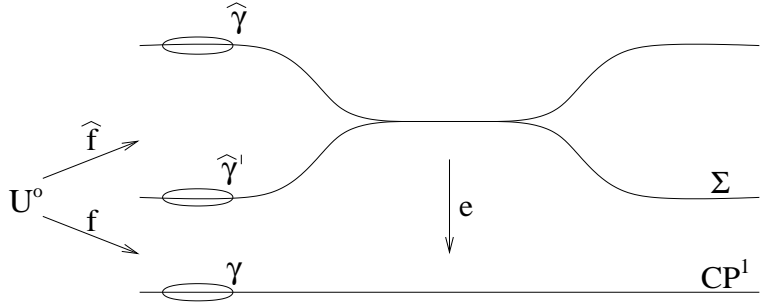}
\caption{Can $M=\widehat f^{-1}(\widehat\gamma )$ and $M'=\widehat
f^{-1}(\widehat\gamma ' )$ be irreducible components of
$f^{-1}(\gamma )$?}
\end{center}
\end{figure}


\begin{thebibliography}{10}
\bibitem[Bru]{Bru} M. Brunella, {\sl Singular Levi-flat hypersurfaces
and codimension one foliations}, Ann. Scuola Norm. Sup. Pisa (5)6
(2007), 661-672
\bibitem[CLN]{CLN} D. Cerveau, A. Lins Neto, {\sl Local Levi-flat
hypersurfaces invariant by a codimension one holomorphic foliation},
Amer. J. Math (to appear)
\bibitem[Mou]{Mou} R. Moussu, {\sl Sur l'existence d'int\'egrales
premi\`eres holomorphes}, Ann. Scuola Norm. Sup. Pisa (4)26 (1998),
709-717
\bibitem[M-M]{M-M} J.-F. Mattei, R. Moussu, {\sl Holonomie et
int\'egrales premi\`eres}, Ann. Sci. \'Ecole Norm. Sup. (4)13
(1980), 469-523
\bibitem[Suz]{Suz} M. Suzuki, {\sl Sur les int\'egrales premi\`eres
de certains feuilletages analytiques complexes}, Lect. Notes in
Math. 670 (1976), 53-58
\end{thebibliography}
\end{document}